\newtheorem{theorem}{Theorem}[section]
\newtheorem{lemma}[theorem]{Lemma}
\newtheorem{note}[theorem]{Note}
\newtheorem{prop}[theorem]{Proposition}
\newtheorem{cor}[theorem]{Corollary}
\newtheorem*{Theorem1'}{Theorem 1'}
\theoremstyle{definition}
\newtheorem{example}[theorem]{Example}
\theoremstyle{remark}
\numberwithin{equation}{section}
\newcommand \Q{{\mathbb Q}}
\newcommand \End{{\mathrm {End}}}
\newcommand \chr{{\mathrm {char}}}
\newcommand \Z{{\mathbb Z}}
\newcommand \GL{{\mathrm {GL}}}
\newcommand \al{{\alpha}}
\newcommand \de{{\delta}}
\newcommand \ga{{\gamma}}
\newcommand \si{{\sigma}}
\newcommand \Aut{{\mathrm{ Aut}}}
\newcommand \Ann{{\mathrm{ Ann}}}
\begin{document}

\title [\small{Groups admitting a faithful irreducible
representation}] {\small{On infinite groups admitting a faithful irreducible
representation}}

\author{Fernando Szechtman}
\address{Department of Mathematics and Statistics, University of Regina, Regina, Canada}
\email{fernando.szechtman@gmail.com}
%\thanks{The first and second authors were supported in part by an NSERC discovery grant}

\author{Anatolii Tushev}
\address{Department of Mathematics, Dnepropetrovsk National University, Dnepropetrovsk, Ukraine}
\email{tushev@member.ams.org}

%    General info
\subjclass[2010]{20C07}

\keywords{faithful representation; irreducible representation; locally cyclic group; locally finite group}

\begin{abstract} Necessary and sufficient conditions for a group to possess a faithful irreducible
representation are investigated. We consider locally finite groups and groups whose socle is essential.
\end{abstract}

\maketitle

\section{Introduction}

In 1911, Burnside \cite[Note F]{B} posed the problem of finding necessary and sufficient conditions
for a finite group to admit a faithful irreducible complex representation. Several solutions have appeared since then.
See \cite{Sz} for a detailed history of the subject.

The general case of Burnside's problem, when the group and the ground field are arbitrary, is considerably
harder. Partial solutions appear in \cite{T} and \cite{Sz}. It should be noted that \cite[Theorem 1]{T} as well
as \cite[Theorem 1.2]{Sz} require finiteness conditions. In this paper we obtain the following general criterion, where
all finiteness conditions have been removed.

We fix throughout an arbitrary field $k$. All modules will be
assumed to be from the left.

Given a group $G$, let $\mathrm{soc}(G)$ stand for the subgroup of
$G$ generated by all minimal normal subgroups of $G$, and
$\mathrm{A}(G)$ for the subgroup of $G$ generated by all minimal
normal subgroups of $G$ that are torsion abelian. A normal subgroup $N$ of
$G$ is said to be essential if every nontrivial normal subgroup of
$G$ intersects $N$ nontrivially. Recall that $G$ is locally cyclic (resp. finite) if every finitely
generated subgroup of $G$ is cyclic (resp. finite), and that $\mathrm{core}_G(C)$ stands
for the intersection of all normal subgroups of $G$ contained in a given subgroup $C$ of $G$. We write $\Pi(G)$ for
the set of all primes $p$ such that $a^p=1$ for some $a\in G$, $a\neq 1$.

\begin{theorem}\label{tres} Let $G$ be an arbitrary group such that $\mathrm{soc}(G)$ is essential and $\chr(k)\notin\Pi(\mathrm{soc}(G))$.
Then $G$ has a faithful irreducible representation over $k$
if and only if there is a
subgroup $C$ of $\mathrm{A}(G)$ such that $\mathrm{A}(G)/C$ is
locally cyclic and $\mathrm{core}_G(C)=1$.
\end{theorem}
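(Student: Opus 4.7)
The proof splits into the two implications. Throughout, write $A:=\mathrm{A}(G)$; since any two distinct minimal normal subgroups of $G$ intersect trivially and therefore commute, $A$ is abelian, and by the characteristic hypothesis every element of $A$ has order invertible in $k$.

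For necessity, suppose $V$ is a faithful irreducible $kG$-module. The aim is to read off $C$ from the action of $A$ on $V$. Because $A$ is torsion of coprime order, each $a\in A$ acts on $V$ as a semisimple operator, and joint diagonalization over $\bar k$ combined with $\mathrm{Gal}(\bar k/k)$-descent shows that $V$ is semisimple as a $kA$-module. The $kA$-socle $\Soc_{kA}(V)$ is $G$-stable (any $G$-translate of a simple $kA$-submodule is still simple, since $A\triangleleft G$), so by $kG$-irreducibility it equals $V$; pick a simple $kA$-submodule $U$. Commutativity of $kA$ gives $U\cong kA/\mathfrak{m}$ for a maximal ideal $\mathfrak{m}$, and $A$ acts on $U$ through a character $\chi\colon A\to (kA/\mathfrak{m})^{\times}$ into a field. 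Setting $C:=\ker\chi$, the group $A/C$ embeds in the torsion subgroup of a field's multiplicative group and is therefore locally cyclic. Finally, $\core_G(C)$ acts trivially on $U$, and by conjugation on every $G$-translate $gU$, hence on $V=\sum_{g}gU$; faithfulness then forces $\core_G(C)=1$.

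For sufficiency, given $C$, construct a faithful simple module by induction and a Zorn argument. Since $A/C$ is torsion locally cyclic of orders coprime to $\chr(k)$, it embeds in $\bar k^{\times}$ via a character $\chi$ with $\ker\chi=C$; let $L=k(\chi(A))$ and form $W:=LG\otimes_{LA}L_\chi$. A coset-level computation over $G/A$ yields $\ker(W)=\core_G(C)=1$, so $W$ is faithful. To extract a faithful simple quotient, the essential-socle hypothesis reduces the problem to checking that no minimal normal subgroup of $G$ acts trivially on the quotient, since $\mathrm{soc}(G)$ is the direct product of its minimal normals and any nonzero $G$-normal subgroup of $\mathrm{soc}(G)$ contains one of them. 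Abelian minimal normals $M\subseteq A$ are automatically nontrivial on every nonzero quotient of $W$: the conjugate characters $\chi^{g}|_{M}$ are all nontrivial (else $M\subseteq gCg^{-1}$, contradicting $\core_G(C)=1$), and invertibility of $\chi^g(m)-1$ in $L$ forces any would-be $M$-trivial quotient to vanish. For nonabelian minimal normals $B$ one has $B\cap A=1$ and $W|_{B}$ is a direct sum of (twisted) regular $LB$-modules; a Zorn argument on proper submodules $N\subsetneq W$ avoiding a chosen nonzero witness $w_B\in\omega_{B}W$ for each such $B$, followed by a refinement to a maximal submodule that preserves the witness condition, produces a simple faithful quotient over $L$. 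A final descent extracts a faithful simple $kG$-subquotient from this $LG$-module.

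The hard part is the sufficiency direction, specifically the simultaneous selection of a simple quotient that remains nontrivial on every (possibly infinite) family of nonabelian minimal normal subgroups of $G$; a secondary technical ingredient is the $kA$-semisimplicity of $V|_{A}$ used in the necessity direction.
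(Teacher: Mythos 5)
Your necessity argument has a genuine gap at its first real step: the claim that $V$ restricted to $kA$ (where $A=\mathrm{A}(G)$) is semisimple, justified by ``joint diagonalization over $\bar k$ combined with Galois descent.'' Element-wise (indeed, finite-subgroup-wise) semisimplicity does not give simultaneous diagonalization, nor a simple $kA$-submodule, once $A$ is infinite and $V$ is infinite dimensional: for example, the regular module $kA$ of an infinite torsion abelian group in coprime characteristic has zero socle, since a common eigenvector would lie in $kX$ for some finite $X\leq A$ and translating by $a\notin X$ moves its support off $X$. Your observation that $\Soc_{kA}(V)$ is $G$-stable therefore only helps if you can show this socle is nonzero, and you give no argument for that which uses irreducibility over $kG$; the paper explicitly warns that irreducible $kA$-submodules need not exist in general. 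This is exactly the difficulty the paper's Proposition \ref{tu2} is built to avoid: instead of a simple submodule it produces, by a Zorn argument on a submodule of $V$ maximal with respect to excluding a fixed vector together with the local semisimplicity of $kX$ for finite $X\leq A$, a maximal ideal $J$ of $kA$ with $JV\neq V$; then $C=(1+J)\cap A$ gives $A/C$ locally cyclic by Lemma \ref{tu}, and $(1-\core_G(C))V$ is a proper $kG$-submodule, forcing $\core_G(C)=1$ by faithfulness. Unless you can actually prove $\Soc_{kA}(V)\neq 0$ under the coprimality hypothesis (a nontrivial assertion you have not addressed), your route does not go through, whereas the quotient-based route does.

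The sufficiency direction is also not established as written. The paper simply invokes \cite[Theorem 1.1]{Sz}; you attempt to reprove it, but the decisive step is waved at. Taking $N\subseteq W=LG\otimes_{LA}L_\chi$ maximal with respect to avoiding your chosen witnesses only yields a quotient in which every nonzero submodule contains the image of some witness; it is not simple, and the subsequent ``refinement to a maximal submodule that preserves the witness condition'' is precisely what needs proof: a maximal submodule of $W$ containing $N$ (which exists since $W$ is cyclic) may well swallow some witness $w_B$, and then the corresponding nonabelian minimal normal subgroup $B$ can act trivially on the simple quotient, destroying faithfulness. The final ``descent'' from $L$ to $k$ is likewise unargued. (Your treatment of the abelian minimal normal subgroups via invertibility of $\chi^g(m)-1$ on each coset line is fine, and your reduction to minimal normal subgroups using essentiality of $\mathrm{soc}(G)$ is the right idea, but these do not close the gap above.) As it stands, neither implication is proved; the necessity gap is the more fundamental one, and the paper's Theorem \ref{la} shows how to bypass it.
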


{\color{blue} For locally finite groups we have the following stronger version, where $\Pi'(G)$ stands set of all primes $p$ such that $G$ has
a nontrivial normal $p$-subgroup.

\begin{theorem}\label{nuet} Let $G$ be a locally finite group where $\mathrm{soc}(G)$ is essential. Then $G$ has a faithful irreducible representation over $k$ if and only if $\chr(k)\notin\Pi'(G)$ and there is a subgroup $C$ of $\mathrm{A}(G)$ such that $\mathrm{A}(G)/C$ is locally cyclic and~$\mathrm{core}_G(C)=1$.
\end{theorem}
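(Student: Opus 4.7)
The plan is to obtain Theorem \ref{nuet} as a locally-finite refinement of Theorem \ref{tres}, weakening the blanket hypothesis $\chr(k)\notin\Pi(\mathrm{soc}(G))$ of the latter to $\chr(k)\notin\Pi'(G)$ and absorbing it into the biconditional. Two new pieces are required, one per direction, while the remainder runs in parallel with Theorem \ref{tres}.

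\emph{Necessity.} Let $V$ be a faithful irreducible $kG$-module and $p=\chr(k)$. To exclude $p\in\Pi'(G)$, set $P=O_p(G)$ and assume towards a contradiction that $P\neq 1$. Local finiteness makes $P$ a locally finite $p$-group, so each element of the augmentation ideal $\omega(P)\subset kP$ lies in $\omega(P_0)$ for some finite $p$-subgroup $P_0\leq P$; since $\omega(P_0)$ is nilpotent in $kP_0$, the ideal $\omega(P)$ is nil in $kP$. More generally, given $\alpha=\sum\beta_i g_i\in\omega(P)\cdot kG$ with $\beta_i\in\omega(P)$ and $g_i\in G$, choose a finite $P_0\leq P$ containing the supports of the $\beta_i$ and set $H=\langle P_0,g_1,\dots,g_r\rangle$, which is finite by local finiteness. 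Then $H\cap P\trianglelefteq H$ is a finite $p$-subgroup, $\alpha\in\omega(H\cap P)\cdot kH$, and this latter ideal is nilpotent in $kH$ by the standard calculation for finite normal $p$-subgroups in characteristic $p$. Hence $\omega(P)\cdot kG$ is a nil two-sided ideal of $kG$ (two-sidedness comes from $P\trianglelefteq G$), contained in $J(kG)$, and so annihilates $V$; therefore $P$ acts trivially on $V$, contradicting faithfulness. The $C$ condition then follows from the necessity argument of Theorem \ref{tres}: that argument concerns only $A(G)$ and requires only $\chr(k)\notin\Pi(A(G))$. But any abelian minimal normal subgroup $M\trianglelefteq G$ is torsion, its $q$-primary component is $G$-invariant for each prime $q$, and minimality forces $M$ to be elementary abelian of a single prime exponent $q$; such an $M$ is a nontrivial normal $q$-subgroup, so $q\in\Pi'(G)$. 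Thus $\Pi(A(G))\subseteq\Pi'(G)$ and $\chr(k)\notin\Pi'(G)$ suffices.

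\emph{Sufficiency.} Assume $\chr(k)\notin\Pi'(G)$ together with the $C$ condition. If additionally $\chr(k)\notin\Pi(\mathrm{soc}(G))$, Theorem \ref{tres} applies directly. Otherwise $p=\chr(k)\in\Pi(\mathrm{soc}(G))\setminus\Pi'(G)$, and the $p$-elements of $\mathrm{soc}(G)$ must lie in the subgroup $S(G)$ generated by the non-abelian minimal normal subgroups (the abelian ones being elementary abelian $q$-groups with $q\in\Pi'(G)$). Distinct minimal normal subgroups centralize, so $\mathrm{soc}(G)=A(G)\times S(G)$ internally. Mirroring the pattern of Theorem \ref{tres}, one builds a $kG$-module $V_A$ whose restriction to $A(G)$ is faithful (this step uses only the $C$ condition and $\chr(k)\notin\Pi(A(G))$, both of which hold), and a $kG$-module $V_S$ whose restriction to $S(G)$ is faithful (using that every non-abelian characteristically simple group admits a faithful irreducible $k$-representation in any characteristic, and assembling these $G$-equivariantly by an induction/Clifford construction). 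Combine $V_A$ and $V_S$ and pass to an irreducible subquotient; essentiality of $\mathrm{soc}(G)$ then promotes faithfulness on $\mathrm{soc}(G)$ to faithfulness on $G$.

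The chief obstacle is the mixed case in sufficiency: one must refactor the Theorem \ref{tres} construction so as to treat $A(G)$ and $S(G)$ separately and reassemble the pieces $G$-equivariantly over what may be an infinite restricted direct product of minimal normal subgroups. The necessity half is conceptually routine but requires care in transferring the nil-ideal property from $kP$ to $kG$ in the locally finite setting.
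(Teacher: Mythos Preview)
Your necessity argument is correct and is essentially an expanded version of the paper's own. The paper isolates the $p$-subgroup triviality as Theorem~\ref{te5} (proved by the same finite-subgroup localization you describe) and then obtains the $C$ condition via Theorem~\ref{la}, which indeed only needs $\chr(k)\notin\Pi(\mathrm{A}(G))$; your observation that $\Pi(\mathrm{A}(G))\subseteq\Pi'(G)$ is exactly what bridges the two.

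For sufficiency the paper proceeds quite differently: it does not split $\mathrm{soc}(G)$ into $\mathrm{A}(G)$ and a non-abelian complement, nor does it construct $V_A$ and $V_S$ separately. Instead it simply invokes \cite[Theorem~1.1]{Sz}, a sufficiency criterion whose hypotheses are already met by $\chr(k)\notin\Pi'(G)$, essentiality of $\mathrm{soc}(G)$, and the $C$ condition; no ``mixed case'' arises. Your proposed route, by contrast, has real gaps exactly where you flag the obstacle. The assertion that every non-abelian minimal normal subgroup of a locally finite group has a faithful irreducible $k$-representation in every characteristic is not justified (such subgroups can be infinite); the $G$-equivariant assembly over a possibly infinite family of minimal normal subgroups is only gestured at; and, most seriously, the step ``combine $V_A$ and $V_S$ and pass to an irreducible subquotient'' does not preserve faithfulness on $\mathrm{soc}(G)$ in general---an irreducible subquotient of a tensor product or direct sum can easily kill one factor. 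If you want a self-contained sufficiency argument you would need to reproduce the mechanism of \cite[Theorem~1.1]{Sz} rather than build the two pieces independently and hope to glue them.
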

}

For sufficiency in both cases we rely on \cite[Theorem 1.1]{Sz}. We are more concerned here with the necessity of the stated
conditions. In this regard, our main tool to prove Theorem \ref{tres} is the following result, stated under no finiteness conditions.

\begin{theorem}\label{la} Let $G$ be a group with a torsion abelian
subgroup~$A$. Suppose that $\mathrm{char}(k)\notin\Pi(A)$ and $G$
admits a faithful irreducible module $V$ over~$k$.
Then there is a subgroup $C$ of $A$ such that $A/C$ is locally cyclic and $\mathrm{core}_G(C)=1$.
\end{theorem}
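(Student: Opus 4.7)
The plan is to imitate the standard finite-group argument: construct a subgroup $C\le A$ as the kernel of a homomorphism $\phi\colon A\to L^*$ into the multiplicative group of some field extension $L/k$, and then exhibit a nonzero vector $v\in V^C$. Given such data, $A/C\hookrightarrow L^*$ is a torsion subgroup of a field's multiplicative group and hence locally cyclic (every finite subgroup of $L^*$ being cyclic), while $V^{\core_G(C)}\supseteq V^C\ni v$ is nonzero and $G$-invariant (since $\core_G(C)\trianglelefteq G$), so equals $V$ by irreducibility of $V$; faithfulness of $V$ then forces $\core_G(C)=1$.

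I would construct $\phi$ and $C$ via a local-to-global compactness argument. For each finite subgroup $F\le A$, the hypothesis $\chr(k)\notin\Pi(A)$ gives $\chr(k)\nmid|F|$, so $kF$ is semisimple by Maschke and $V|_F$ decomposes into isotypic components indexed by Galois orbits $[\chi]$ of characters $\chi\colon F\to\bar k^*$. Let $\Xi_F$ be the nonempty finite set of orbits actually appearing in $V|_F$. Restriction of characters induces surjective maps $\Xi_{F'}\twoheadrightarrow\Xi_F$ for $F\le F'$ (surjective because if $V^{F}_{[\chi]}\ne 0$, its refinement $\bigoplus_{[\chi']:\,\chi'|_F\sim\chi}V^{F'}_{[\chi']}$ must have a nonzero summand), so $(\Xi_F)$ is an inverse system of nonempty finite sets whose inverse limit is nonempty by Tychonoff. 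A thread $(\chi_F)_F$ in this limit yields compatible kernels $C_F:=\ker\chi_F\le F$ satisfying $C_{F'}\cap F=C_F$ (Galois-conjugate characters share the same kernel), so that $C:=\bigcup_F C_F$ is a well-defined subgroup of $A$ with $C\cap F=C_F$. Each $F/C_F$ embeds into the multiplicative group of the field $L_F$ generated over $k$ by $\chi_F(F)$, hence is cyclic; therefore every finitely generated subgroup of $A/C$ is cyclic and $A/C$ is locally cyclic.

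The main obstacle is to produce the required nonzero vector $v\in V^C$. The natural candidate is a nonzero element of the decreasing intersection $W:=\bigcap_F V^F_{[\chi_F]}$ of compatible isotypic components, for any such vector is fixed by every $C_F$ and hence by $C$. The difficulty is that for infinite torsion abelian $A$ the group algebra $kA$ is typically non-Artinian, $V|_A$ need not be semisimple as a $kA$-module, and $W$ may collapse to zero in infinite dimension even though each individual $V^F_{[\chi_F]}$ is nonzero. I expect the resolution to require invoking the $kG$-irreducibility of $V$ — not merely its $kA$-structure — to refine the thread $(\chi_F)$ in such a way that $W\ne 0$, or alternatively to produce $V^C\ne 0$ by a more global argument; a useful auxiliary observation is that a finite abelian $B$ with $\chr(k)\nmid|B|$ acting $k$-linearly, faithfully, and \emph{freely} on a nonzero space is cyclic, which lets one upgrade any non-free action on a $C_i$-fixed subspace to a strictly larger stabilizer and thus propagate the construction. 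Once $v$ is produced, the remaining verifications are immediate from the first paragraph.
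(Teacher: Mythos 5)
Your construction of $C$ is sound as far as it goes: the inverse-limit argument over finite subgroups $F\leq A$ does produce a compatible thread of Galois orbits, the kernels glue to a subgroup $C$ with $C\cap F=\ker\chi_F$, and $A/C$ is locally cyclic. But the proof has a genuine gap exactly where you flag it, and it is the heart of the theorem: you never produce the nonzero vector $v\in V^C$ (or any other reason why $\mathrm{core}_G(C)$ acts trivially). The candidate $W=\bigcap_F V^F_{[\chi_F]}$ can indeed vanish: for infinite torsion $A$ the restriction $V|_A$ need not be semisimple and need not contain \emph{any} irreducible $kA$-submodule (the paper makes precisely this point after Theorem \ref{te1} -- the finiteness hypothesis there exists only to guarantee such a submodule), so an invariants-based strategy "find a $C$-fixed vector and apply Lemma \ref{le1}" cannot be completed by refining the thread alone; your auxiliary observation about free actions of finite abelian groups does not bridge this, since it again only controls finite levels and not the intersection over the directed system.

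The paper resolves the difficulty by dualizing from invariants to coinvariants, which is the idea your sketch is missing. One first proves (Proposition \ref{tu2}) that there is a maximal ideal $J$ of $kA$ with $JV\neq V$: pick $0\neq a\in V$ and, by Zorn's Lemma, a $kA$-submodule $U$ maximal with respect to $a\notin U$; Maschke applied to finite subgroups $X\leq A$ (here $\chr(k)\notin\Pi(A)$ enters) shows $J=\Ann_{kA}(V/U)$ is maximal. Setting $C=(J+1)\cap A$, the group $A/C$ embeds in the torsion part of the field $(kA/J)^*$, hence is locally cyclic (Lemma \ref{tu}) -- this replaces your character thread. Finally, for $N=\core_G(C)$ one has $(1-N)\subseteq J$, so $(1-N)V\subseteq JV\neq V$; since $N\unlhd G$, $(1-N)V$ is a $kG$-submodule, hence zero by irreducibility, so $N$ acts trivially and $N=1$ by faithfulness. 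Note that this argument never claims $V^C\neq 0$; it only needs the proper quotient condition $JV\neq V$, which is exactly what your approach lacks a substitute for.
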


The following additional tool is required to prove Theorem \ref{nuet}.

\begin{theorem}\label{te5} Suppose that $k$ has prime characteristic $p$ and let $G$ be a locally finite group with a normal $p$-subgroup $P$.
Then $P$ acts trivially on every irreducible $kG$-module~$V$.
\end{theorem}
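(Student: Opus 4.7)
The plan is to show that $V^{P}:=\{v\in V:pv=v\text{ for all }p\in P\}$ is nonzero; since $P\trianglelefteq G$, a direct check ($p(gv)=g(g^{-1}pg)v=gv$ for $v\in V^{P}$, $g\in G$) gives that $V^{P}$ is a $kG$-submodule of $V$, so irreducibility will upgrade $V^{P}\ne 0$ to $V^{P}=V$, which is the desired assertion that $P$ acts trivially. To produce a nonzero $P$-fixed vector I would argue by contradiction, assuming $V^{P}=0$. Let $I(kP)$ denote the augmentation ideal of $kP$ and set $J:=kG\cdot I(kP)=I(kP)\cdot kG$, a two-sided ideal of $kG$ precisely because $P$ is normal. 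Since $V^{P}=\{v\in V:Jv=0\}$, the assumption $V^{P}=0$ makes $JV$ a nonzero $kG$-submodule, hence $JV=V$ by irreducibility; equivalently, $V=I(kP)\cdot V$.

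Fix any nonzero $v_{0}\in V$ and write $v_{0}=\sum_{i=1}^{n}(p_{i}-1)w_{i}$ with $p_{i}\in P$ and $w_{i}\in V$. Each $w_{i}$ is a finite $k$-linear combination of translates $g\cdot v_{0}$ with $g\in G$, so local finiteness of $G$ lets me pick one finite subgroup $H\le G$ containing every $p_{i}$ together with all the $g$'s needed to express each $w_{i}$. Setting $V_{H}:=kH\cdot v_{0}$ and $Q:=P\cap H$, this places every $w_{i}$ inside $V_{H}$ and exhibits $v_{0}$ as an element of $I(kQ)\cdot V_{H}$.

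Now I would invoke the finite case. Since $Q\trianglelefteq H$, conjugation by $H$ stabilises $I(kQ)$, so $I(kQ)\cdot V_{H}$ is in fact a $kH$-submodule of $V_{H}$. It contains the cyclic generator $v_{0}$ of $V_{H}=kH\cdot v_{0}$ and therefore equals $V_{H}$; iterating the resulting identity $V_{H}=I(kQ)\cdot V_{H}$ gives $V_{H}=I(kQ)^{|Q|}\cdot V_{H}=0$, using the classical nilpotency $I(kQ)^{|Q|}=0$ for the finite $p$-group $Q$ in characteristic $p$. This contradicts $v_{0}\ne 0$. The main obstacle is the transplantation step: one must arrange a single finite $H$ that simultaneously contains all the $p_{i}$ and expresses every $w_{i}$ inside $V_{H}$, so that a relation witnessed in the infinite module $V$ can be read inside one finite-dimensional $kH$-module; once that is secured, the argument collapses to the standard nilpotency of the augmentation ideal of a finite $p$-group in characteristic $p$.
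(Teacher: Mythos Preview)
Your argument is correct. Both proofs hinge on the same manoeuvre---passing to a finite subgroup $H\le G$ that captures a given finite amount of data---but they apply it at different levels. The paper works purely inside the group ring: for each $a\in P$ it shows $1-a\in J(kG)$ by checking that $1-\beta(1-a)$ is left invertible for every $\beta\in kG$; taking $H$ to be the (finite) subgroup generated by $a$ and the support of $\beta$, one has $a\in P\cap H\trianglelefteq H$, and the finite case (their Theorem~\ref{te4}) gives $1-a\in J(kH)$, hence left invertibility in $kH\subseteq kG$. This yields the stronger ring-theoretic statement $I(kP)\subseteq J(kG)$, valid independently of any particular $V$. Your route instead fixes $V$ and a vector $v_{0}$, transplants the relation $v_{0}\in I(kP)V$ into a cyclic $kH$-module $V_{H}$, and finishes with the elementary nilpotency $I(kQ)^{|Q|}=0$ for the finite $p$-group $Q=P\cap H$, bypassing the Jacobson-radical formalism and Theorem~\ref{te4} altogether. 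The paper's approach is slicker and module-free; yours is more self-contained, reducing directly to the most classical fact about modular group algebras of $p$-groups.
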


Theorems \ref{la} and \ref{te5} also allow us to obtain results on irreducible representations of soluble torsion groups.

\begin{theorem}\label{soltor} Let $G$ be a torsion soluble group.

(a) If $G$ has a faithful irreducible representation over $k$, then for any abelian normal subgroup
$A$ of $G$ we have   $\chr(k)\notin\Pi(A)$   and there is a subgroup $C$ of $A$ such that $A/C$ is
locally cyclic and $\mathrm{core}_G(C)=1$.

(b) If $G$ has a maximal abelian normal subgroup $A$  such that $\chr(k)\notin\Pi(A)$
and there is a subgroup $C$ of $A$ such that $A/C$ is locally cyclic and $\mathrm{core}_G(C)=1$, then
$G$ has a faithful irreducible representation over $k$.
\end{theorem}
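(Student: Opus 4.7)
The overall plan is to reduce both parts to the machinery already in place. I begin by recording the standard fact that a torsion soluble group is locally finite: by induction on the derived length, a finitely generated subgroup $H \le G$ has finite abelianization modulo $H \cap G'$ (finitely generated torsion abelian, hence finite); by Reidemeister--Schreier $H \cap G'$ is finitely generated, and the inductive hypothesis applied to the commutator subgroup shows $H \cap G'$ is finite; hence $H$ is finite.

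For part (a), fix an abelian normal subgroup $A$ of $G$. To rule out $\chr(k) \in \Pi(A)$, I argue by contradiction. If $\chr(k) = p$ and $A$ contains an element of order $p$, then the $p$-primary component $A_p$ of the torsion abelian group $A$ is a nontrivial $p$-subgroup, characteristic in $A$ and therefore normal in $G$. Since $G$ is locally finite, Theorem \ref{te5} forces $A_p$ to act trivially on the faithful irreducible module, contradicting faithfulness. Once $\chr(k) \notin \Pi(A)$ is established, Theorem \ref{la} applies directly to the torsion abelian subgroup $A$ and produces the required subgroup $C$ with $A/C$ locally cyclic and $\core_G(C) = 1$.

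For part (b), the aim is to invoke the sufficiency result \cite[Theorem 1.1]{Sz}, whose hypotheses match those in the statement provided $A$ is essential in $G$. Here maximality and solubility combine. Suppose $N \ne 1$ is a normal subgroup of $G$ with $N \cap A = 1$. Then $[N,A] \le N \cap A = 1$, so $N$ centralizes $A$. Since $N$ is torsion and soluble and nontrivial, the last nontrivial term $B$ of its derived series is a nontrivial abelian subgroup that is characteristic in $N$, hence normal in $G$. Being contained in $N$, the subgroup $B$ centralizes $A$ and meets $A$ trivially, so $AB$ is an abelian normal subgroup of $G$ strictly larger than $A$, contradicting maximality. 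Thus $A$ is essential, and \cite[Theorem 1.1]{Sz} then yields the desired faithful irreducible representation.

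The main point requiring care is the order of operations in (a): Theorem \ref{la} assumes $\chr(k) \notin \Pi(A)$ in its hypothesis, so the characteristic condition must be verified first via Theorem \ref{te5}. It is precisely here that the reduction to local finiteness is essential, since Theorem \ref{te5} is stated only for locally finite groups; without solubility of $G$ one would have no analogue of this step, and the theorem would fail.
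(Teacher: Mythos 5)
Part (a) of your proposal is correct and is essentially the paper's own argument: torsion soluble implies locally finite, Theorem \ref{te5} (applied to the characteristic subgroup $A_p$, normal in $G$) rules out $\chr(k)\in\Pi(A)$, and then Theorem \ref{la} produces $C$. Your essentiality argument for a maximal abelian normal subgroup in (b) is also correct, and is exactly the detail behind the paper's ``it is easy to see that $A$ is essential.''

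The gap is in the last step of (b), where you invoke \cite[Theorem 1.1]{Sz} and assert that its hypotheses ``match those in the statement provided $A$ is essential.'' As that theorem is stated and used in this paper (it is cited precisely as the sufficiency half of Theorems \ref{tres} and \ref{nuet}), its hypotheses concern $\mathrm{soc}(G)$ being essential and a core-free subgroup of $\mathrm{A}(G)$ with locally cyclic quotient --- not an arbitrary maximal abelian normal subgroup $A$. The hypotheses of (b) do not imply that $\mathrm{soc}(G)$ is essential, so there is no way to feed them into that result. Concretely, let $\chr(k)\neq p$ and let $G=C_p\wr C_{p^\infty}$ (restricted wreath product), a torsion metabelian group. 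Its base group $B=\bigoplus_{x\in C_{p^\infty}}C_p$ is a maximal abelian normal subgroup (it is self-centralizing), hence essential; identifying $B$ with the group algebra $F_p[C_{p^\infty}]$ as a module, the hyperplane $C=\ke\lambda$ for the functional $\lambda$ with $\lambda(1)=1$ and $\lambda(g)=0$ for $g\neq 1$ has $B/C\cong C_p$ locally cyclic and $\core_G(C)=1$, because every nonzero submodule contains the partial norm elements $\sigma_m=\sum_{g\in C_{p^m}}g$ for all large $m$ and $\lambda(\sigma_m)=1$. Yet $\mathrm{soc}(G)=1$: a minimal normal subgroup would have to lie in $B$ and would be a nonzero fixed point of the locally finite $p$-group $C_{p^\infty}$ acting on $B$, which does not exist. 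So (b)'s hypotheses hold while the hypotheses of \cite[Theorem 1.1]{Sz}, in the form used in this paper, fail. The paper avoids this by a different route: having shown $A$ essential, it applies \cite[Lemma 2]{T} to get an irreducible representation $\varphi$ of $A$ over $k$ with $\ke\varphi=C$ (this uses $\chr(k)\notin\Pi(A)$ and $A/C$ locally cyclic), and then \cite[Lemma 3]{T}, which produces a faithful irreducible representation of $G$ from the fact that $A\cap X\not\subseteq\ke\varphi$ for every nontrivial normal subgroup $X$ of $G$. Your argument would be complete if you replaced the appeal to \cite[Theorem 1.1]{Sz} by these two lemmas (or by an equivalent construction working directly with the essential subgroup $A$).
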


Evidently, the necessary and sufficient conditions obtained in the above theorem may be amalgamated in the
following criterion.

\begin{cor}\label{soltor1} Let $G$ be a torsion soluble group.

(a) If $G$ is abelian, then $G$ has a faithful irreducible representation over $k$ if and only if $\chr(k)\notin\Pi(G)$
and G is locally cyclic.

(b) If $G$ is nonabelian, then $G$ has a faithful irreducible representation over $k$ if and only if $G$ has a maximal abelian normal subgroup $A$ such that $\chr(k)\notin\Pi(A)$ and there is a subgroup $C$ of $A$ such that $A/C$ is locally cyclic and $\mathrm{core}_G(C)=1$.
\end{cor}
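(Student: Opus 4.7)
The plan is to deduce Corollary \ref{soltor1} directly from Theorem \ref{soltor} by amalgamating its two parts. Two elementary observations drive the argument. First, in any abelian group every subgroup is normal, so the $G$-core of a subgroup of $G$ equals that subgroup itself whenever $G$ is abelian. Second, every group admits a maximal abelian normal subgroup by Zorn's lemma: the poset of abelian normal subgroups is nonempty and closed under ascending unions, since such a union is again abelian (any two elements lie in a common term of the chain) and normal.

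For part (a), assume $G$ is torsion abelian; then $A := G$ is itself a maximal abelian normal subgroup of $G$. If $G$ is locally cyclic and $\chr(k)\notin\Pi(G)$, we may choose $C = 1$, so that $A/C = G$ is locally cyclic and $\mathrm{core}_G(C) = 1$, and Theorem \ref{soltor}(b) then produces a faithful irreducible representation. Conversely, given such a representation, Theorem \ref{soltor}(a) applied to the abelian normal subgroup $A = G$ yields $\chr(k)\notin\Pi(G)$ together with a subgroup $C \leq G$ such that $G/C$ is locally cyclic and $\mathrm{core}_G(C) = 1$. By the first observation above, $C = \mathrm{core}_G(C) = 1$, hence $G = G/C$ is locally cyclic.

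For part (b), assume $G$ is torsion soluble and nonabelian. Sufficiency is a verbatim restatement of Theorem \ref{soltor}(b). For necessity, the second observation produces a maximal abelian normal subgroup $A$ of $G$; applying Theorem \ref{soltor}(a) to this abelian normal subgroup $A$ yields $\chr(k)\notin\Pi(A)$ and the required subgroup $C \leq A$ with $A/C$ locally cyclic and $\mathrm{core}_G(C) = 1$. The entire deduction is bookkeeping: Theorem \ref{soltor} already carries all the real content, so no genuine obstacle arises; the only point meriting comment is the Zorn argument guaranteeing the existence of a maximal abelian normal subgroup in the nonabelian case.
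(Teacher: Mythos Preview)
Your proposal is correct and follows exactly the route the paper intends: the paper offers no formal proof of Corollary~\ref{soltor1}, remarking only that it is obtained ``evidently'' by amalgamating the two parts of Theorem~\ref{soltor}, and your argument carries out precisely that amalgamation. The two observations you isolate---that in an abelian group $\mathrm{core}_G(C)=C$, and that Zorn's lemma supplies a maximal abelian normal subgroup in the nonabelian case---are the only details needed to make the deduction rigorous.
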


The discussion is illustrated by several examples. In particular, if $k$ has prime characteristic $p$ and $G=F_p(t)^+\rtimes F_p(t)^*$, then
$G$ is shown to have a faithful irreducible module over $k$. As indicated in \S\ref{ejer}, this feature of $F_p(t)^+\rtimes F_p(t)^*$ serves as counterexample to numerous results when their respective finiteness conditions are removed. This applies, in particular, to Theorem
\ref{nuet}, where local finiteness cannot be eliminated, to Theorem
\ref{te5}, whose hypothesis cannot be relaxed to ask that $P$, but not necessarily
$G$, be locally finite, and to the following necessary criterion for a group to have a faithful irreducible representation.

\begin{theorem}\label{te1} Let $G$ be a group having a faithful irreducible module $V$ over~$k$. Let $A$ be a normal, torsion, abelian subgroup of $G$. Suppose
that
\begin{equation}
\label{alguno} [G:C_G(A)]<\infty.
\end{equation}
Then $\chr(k)\notin \Pi(A)$.
\end{theorem}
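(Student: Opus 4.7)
The plan is to argue by contradiction by producing a finite nontrivial normal $p$-subgroup of $G$ that must act trivially on $V$. Since the case $\chr(k)=0$ gives the conclusion trivially, assume $p:=\chr(k)>0$ and, for contradiction, that some $a\in A$ with $a\ne 1$ satisfies $a^p=1$.

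The key observation is that the hypothesis $[G:C_G(A)]<\infty$ forces the $G$-conjugacy class of $a$ to be finite, say $a^G=\{a_1,\dots,a_n\}$. Set $B:=\langle a_1,\dots,a_n\rangle\le A$. Because $A$ is abelian and every $a_i$ has order $p$, the subgroup $B$ has exponent dividing $p$; being finitely generated it is a finite elementary abelian $p$-group. It is nontrivial (since $a\in B$) and normal in $G$ by construction.

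With $B$ in hand, the standard characteristic-$p$ fixed-point argument applies. Because $B$ is a finite $p$-group and $\chr(k)=p$, the augmentation ideal $I\triangleleft kB$ is nilpotent, say $I^N=0$. Write $V^B=\{v\in V: Iv=0\}$ for the fixed-point subspace. Let $m$ be minimal with $I^mV=0$, which exists since $I^NV=0$ and which satisfies $m\ge 1$ because $V\ne 0$. Then $I^{m-1}V$ is nonzero and annihilated by $I$, so $V^B\ne 0$. Since $B$ is normal in $G$, the subspace $V^B$ is a nonzero $kG$-submodule of $V$, and irreducibility forces $V^B=V$. Thus $B$ acts trivially on $V$, contradicting the faithfulness of $V$ since $B\ne 1$.

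The principal obstacle is the construction of the finite nontrivial normal $p$-subgroup $B$; once such a $B$ is available the fixed-point argument is essentially the one used to establish Theorem~\ref{te5}, and it is precisely the finiteness hypothesis $[G:C_G(A)]<\infty$ that supplies the conjugacy-class finiteness needed to produce $B$ inside the possibly infinite group $A[p]$.
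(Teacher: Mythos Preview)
Your proof is correct, but it follows a genuinely different route from the paper's. The paper invokes \cite[Theorem 7.2.16]{Pa} to produce an irreducible $kC_G(A)$-submodule $U$ of $V$, then observes that each $1-a$ with $a\in A_p$ is nilpotent in the commutative ring $kA$ and hence acts as zero in the division ring $\End_{kC_G(A)}(U)$; this shows the whole characteristic subgroup $A_p$ fixes $U$, and Lemma~\ref{le1} finishes. You instead exploit $[G:C_G(A)]<\infty$ to deduce $[G:C_G(a)]<\infty$, so that $B=\langle a^G\rangle$ is a finite elementary abelian $p$-subgroup that is normal in $G$; from there your argument is exactly Theorem~\ref{te4} (not Theorem~\ref{te5}, which needs $G$ locally finite), and you could simply cite it. Your approach has the advantage of being self-contained and avoiding the nontrivial Passman result on the existence of irreducible submodules over finite-index subgroups; the paper's approach, on the other hand, kills all of $A_p$ at once rather than one finite piece at a time, though for the stated theorem this makes no difference.
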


The finiteness condition (\ref{alguno}) is imposed to ensure that
$V$ has an irreducible $kC_G(A)$-submodule,  which implies the existence an irreducible $kA$-submodule of~$V$. In general, no such
submodules exist.

\section{Proofs}

To prove Theorem \ref{te5} we will rely on the following well-known special case.

\begin{theorem}\label{te4} Suppose $k$ has prime characteristic $p$ and let $G$ be a group with a finite, normal, $p$-subgroup $P$.
Then $P$ acts trivially on every irreducible $kG$-module~$V$.
\end{theorem}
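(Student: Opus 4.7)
The plan is to exploit the $P$-fixed subspace
$$V^P=\{v\in V : qv=v \text{ for all } q\in P\}$$
and conclude by irreducibility, just as in the standard proof of the corresponding assertion for finite groups.

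First I would check that $V^P$ is a $kG$-submodule of $V$. This is where normality of $P$ enters: for $g\in G$, $q\in P$, and $v\in V^P$, one has $q(gv)=g(g^{-1}qg)v=gv$ because $g^{-1}qg\in P$. Thus $gV^P\subseteq V^P$ for every $g\in G$.

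Next I would show $V^P\neq 0$. Given any nonzero $v\in V$, form $W=kP\cdot v$. Since $P$ is finite, $W$ is a nonzero finite-dimensional $kP$-submodule of $V$, so it suffices to produce a $P$-fixed vector inside $W$. This is the classical fact that a finite $p$-group acting on a nonzero finite-dimensional vector space over a field of characteristic $p$ always has a nonzero fixed vector. I would argue it by induction on $|P|$: the base case $|P|=1$ is trivial, and for the inductive step one picks a central element $z\in P$ of order $p$ and observes that $z-1$ is nilpotent on $W$ since $(z-1)^p=z^p-1=0$. Hence $W^{\langle z\rangle}=\ker(z-1)\neq 0$; because $z$ is central in $P$, the subspace $W^{\langle z\rangle}$ is $P$-stable, and $P/\langle z\rangle$ acts on it. Applying the inductive hypothesis to $P/\langle z\rangle$ yields $W^P=(W^{\langle z\rangle})^{P/\langle z\rangle}\neq 0$.

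Finally, since $V^P$ is a nonzero $kG$-submodule of the irreducible module $V$, we must have $V^P=V$, i.e.\ $P$ acts trivially on $V$. No real obstacle is anticipated here; the only subtlety is the justification that $W^P\neq 0$, and the induction on $|P|$ using the nontrivial center of a finite $p$-group handles it cleanly.
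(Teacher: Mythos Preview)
Your proof is correct and follows essentially the same route as the paper's: the paper extracts an irreducible $kP$-submodule of $V$, applies the induction on $|P|$ via a central element of order $p$ (its Theorem~\ref{te3}) to obtain a nonzero $P$-fixed vector, and then invokes Lemma~\ref{le1} to conclude $V^P=V$. You carry out the same center-based induction directly on the finite-dimensional cyclic $kP$-submodule $W=kP\cdot v$ and prove Lemma~\ref{le1} inline, which is a cosmetic rather than a mathematical difference.
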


\noindent{ \textbf{\textit{ Proof of Theorem \ref{te5}}}}. Let $a\in P$. We need to show that $1-a\in J(kG)$. This means that $1-\beta(1-a)$ is left invertible in $kG$ for every $\beta\in kG$. Given such $\beta$,
let $H$ be the subgroup of $G$ generated by $a$ and the support of $\beta$. Since $G$ is locally finite and $H$ is finitely generated, $H$ is a finite subgroup of $G$.
But $a\in P\cap H$, which is a normal $p$-subgroup
of the finite group~$H$. By Theorem \ref{te4}, $P\cap H$ acts
trivially on every irreducible $kH$-module, so $1-a\in J(kH)$.
Thus $1-\beta(1-a)$ is left invertible in $kH$ and hence in $kG$.\qed

\medskip

The proof of Theorem \ref{te1} will require the following obvious result.

\begin{lemma}\label{le1} Let $N\unlhd G$ be groups and let $V$ be an irreducible $kG$-module. Suppose $N$ has a nonzero fixed point in $V$.
Then $N$ acts trivially on $V$.
\end{lemma}

Given an abelian group $A$ and a prime $p$, we let $A_p$ stand for
the subgroup of $A$ consisting of all $a\in A$ such that
$a^{p^m}=1$ for some $m\geq 0$.

\medskip

\noindent{\textbf{ \textit{ Proof of Theorem \ref{te1}}}}. Since $[G:C_G(A)]<\infty$, \cite[Theorem 7.2.16]{Pa}
ensures the existence of an irreducible $k C_G(A)$-submodule, say
$U$,  of $V$. Since $U$ is irreducible, $D=\End_{k C_G(A)}(U)$ is a division ring.

Suppose $k$ has prime characteristic $p$. Given $a\in A_p$, the element $1-a\in kA_p$ is nilpotent, so
its image in $D$ must be 0. It follows that $A_p$ acts trivially on $U$. As $A_p$ is a characteristic
subgroup of $A$, we infer from Lemma \ref{le1} that $A_p$ acts
trivially on $V$. Since $V$ is a faithful $G$-module, we deduce that
$A_p$ is trivial.\qed

\medskip

We will make use of the following two results in order to prove Theorem \ref{la}.

%\begin{lemma}\label{co1} Let $G$ be a group having a faithful irreducible module $V$ over~$k$. Let $C$ be any subgroup of $G$
%having a nonzero fixed point in $V$. Then $\core_G(C)=1$.
%\end{lemma}

%\begin{proof} This follows at once from Lemma \ref{le1}.
%\end{proof}

\begin{lemma}\label{tu} Let $A$ be a torsion abelian group. Let $J$ be a maximal ideal of $kA$ and set $J^\dag = (J+1) \bigcap A$.
Then the quotient group $A/J^\dag$ is locally cyclic.
\end{lemma}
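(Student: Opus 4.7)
The plan is to realize $A/J^\dag$ as a subgroup of the multiplicative group of a field, and then invoke the classical fact that every finite subgroup of such a multiplicative group is cyclic.

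First I would check that $J^\dag$ is a subgroup of $A$. If $a,b\in J^\dag$, then $a-1,b-1\in J$, and the identities
\[
ab-1 = a(b-1) + (a-1), \qquad a^{-1}-1 = -a^{-1}(a-1)
\]
show that $ab-1\in J$ and $a^{-1}-1\in J$, so $ab,\,a^{-1}\in J^\dag$.

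Next, because $kA$ is commutative and $J$ is maximal, the quotient $F=kA/J$ is a field. Since each $a\in A$ is a unit of $kA$ (with inverse $a^{-1}\in A$), its image in $F$ is a unit, and so the composition
\[
A \hookrightarrow kA \twoheadrightarrow F
\]
lands in $F^{*}$ and defines a group homomorphism $\varphi\colon A\to F^{*}$. Its kernel is precisely $\{a\in A : a-1\in J\}=J^\dag$, so $\varphi$ induces an embedding $A/J^\dag \hookrightarrow F^{*}$.

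Finally, since $A$ is torsion, so is $A/J^\dag$; hence any finitely generated subgroup $H$ of $A/J^\dag$ is finite. Under $\varphi$ it corresponds to a finite subgroup of $F^{*}$, which is cyclic by the standard fact that a finite subgroup of the multiplicative group of a field is cyclic. Thus every finitely generated subgroup of $A/J^\dag$ is cyclic, which is precisely the assertion that $A/J^\dag$ is locally cyclic. There is no real obstacle here: the only substantive step is spotting the natural embedding of $A/J^\dag$ into $F^{*}$.
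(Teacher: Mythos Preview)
Your argument is correct and follows the same approach as the paper: embed $A/J^\dag$ into $F^*=(kA/J)^*$ via the natural map and use that torsion subgroups of the multiplicative group of a field are locally cyclic. You simply supply more detail (the subgroup verification and the reduction to finite subgroups of $F^*$) than the paper's terse version.
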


\begin{proof} As $J$ is maximal, $F= kA/J$ is a field and we have a natural homomorphism $A\to F^*$
with kernel $J^\dag$. As the torsion subgroup of $F^*$  is locally cyclic, the assertion follows.
\end{proof}

\begin{prop}\label{tu2} Let $A$ be a torsion abelian group. Suppose that $\mathrm{char}(k)\notin\Pi(A)$. Then for any
$kA$-module $V\neq 0$ there is a maximal ideal $J$ of $kA$ such that $JV\neq V$.
\end{prop}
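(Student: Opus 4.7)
The plan is to first establish that $kA$ is a commutative von Neumann regular ring -- the crucial consequence of $\chr(k)\notin\Pi(A)$ -- and then exploit this structural fact through an injectivity statement to produce the desired maximal ideal.

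For the regularity, I would write $kA=\varinjlim_F kF$ as $F$ runs over the finite subgroups of $A$. Each $kF$ is semisimple by Maschke (the prime divisors of $|F|$ lie in $\Pi(A)$, hence none equals $\chr(k)$), so each $kF$ is a finite product of fields and therefore von Neumann regular; since regularity is preserved under filtered colimits, $kA$ is von Neumann regular. A consequence I will need is that every ideal $I\trianglelefteq kA$ is idempotent: given $x\in I$, pick $y\in kA$ with $x=x^2y$, so $x=x\cdot(xy)\in I\cdot I$.

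Next I would prove: for every maximal ideal $J$ of $kA$, the residue field $kA/J$ is injective as a $kA$-module. By Baer's criterion one must extend any $kA$-homomorphism $\phi:I\to kA/J$ (with $I$ an ideal of $kA$) to all of $kA$. If $I\subseteq J$, the identity $\phi(xy)=\overline{x}\phi(y)=0$ shows $\phi$ vanishes on $I^2=I$, and one extends by zero. If $I\not\subseteq J$, maximality gives $1=i+j$ with $i\in I$ and $j\in J$, and one checks that $r\mapsto\overline{r}\phi(i)$ extends $\phi$: for $x\in I$, $\phi(x)=\phi(xi)+\phi(xj)=\overline{x}\phi(i)+\overline{j}\phi(x)=\overline{x}\phi(i)$.

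The main argument is then short. Given $V\neq 0$, pick a nonzero $v\in V$, set $M=\Ann_{kA}(v)$, and choose (by Zorn) a maximal ideal $J\supseteq M$. The submodule $kA\cdot v\cong kA/M$ admits the nonzero canonical surjection onto $kA/J$; by injectivity, this extends to a $kA$-homomorphism $\psi:V\to kA/J$ which is still nonzero, and hence surjective by the simplicity of $kA/J$. Then $J$ annihilates $V/\ker\psi$, so $JV\subseteq\ker\psi\subsetneq V$, as required. I expect the injectivity assertion in the middle step to be the main technical point: that is where idempotency of ideals in $kA$ -- and thus the hypothesis $\chr(k)\notin\Pi(A)$ -- enters decisively.
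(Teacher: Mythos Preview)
Your argument is correct and takes a genuinely different route from the paper's. The paper fixes $0\neq a\in V$, uses Zorn to find a submodule $U$ maximal subject to $a\notin U$, and then shows directly that $J=\Ann_{kA}(V/U)$ is maximal: if it were not, one descends to a finite subgroup $X\leq A$ where $kX\cap J$ fails to be maximal, uses the idempotent decomposition of the semisimple ring $kX$ to split $V/U$ into at least two nonzero pieces, and obtains a contradiction to the maximality of $U$. Your approach instead establishes the structural fact that $kA$ is a commutative von Neumann regular ring and hence a V-ring (every simple module is injective), and then produces $J$ via an injective-extension argument. The paper's proof is more self-contained and uses only Maschke plus elementary module theory; yours invokes Baer's criterion and the regularity machinery, but in exchange it yields the stronger and reusable statement that $kA/J$ is injective for every maximal ideal $J$. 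The two arguments are in fact cousins: the paper's module $V/U$ is subdirectly irreducible with essential socle generated by $a+U$, and over a V-ring subdirectly irreducible modules are simple---which is precisely what the paper verifies by hand.
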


\begin{proof} Given $0\neq a \in V$, Zorn's Lemma ensures the existence of a submodule $U$ of $V$ that is maximal with respect to $a\notin U$. It is sufficient to show that $J=\Ann_{kA}(V/U)$ is a maximal ideal of $kA$.

Suppose $J$ is not maximal and hence $kA/J$ is not a field. This easily implies that there is a finite subgroup $X$ of $A$ such that  $kX/J_X$ is not a field, where $J_X= kX \bigcap J = \Ann_{kX} (V/U)$, and hence  $J_X=\Ann_{kX} (V/U)$ is not a maximal ideal of $kX$.

Since $\mathrm{char}(k)\notin\Pi(A)$, we have $1=e_1+\cdots+e_m$, where $e_1,\dots,e_m$ are orthogonal idempotents in $kX$ and each $S_i=e_ikX$
is a simple ideal of $kX$. We then have $V/U=e_1\cdot (V/U)\oplus\cdots\oplus e_n\cdot (V/U)$, where the annihilator of each nonzero component $e_i\cdot (V/U)$ is a maximal ideal of $kX$, namely the sum of all $S_j$ with $j\neq i$. As $\Ann_{kX} (V/U)$ is not a maximal ideal of $kX$,
there must be at least two nonzero components. On the other hand, since $a\notin U$, we see that $a+U$ can belong
to at most one component. Thus there is at least one $i$ such that $e_i(V/U)\neq 0$ and $a+U\notin e_i(V/U)$. As $kA$ is commutative, we readily see that $e_iV+U$ is a $kA$-submodule of $V$. As such, it properly contains $U$ and $a\notin e_iV+U$, against the maximality of $U$.
Thus, a contradiction is obtained and hence $J=\Ann_{kA}(V/U)$ is a maximal ideal of $kA$.
\end{proof}

Example \ref{jt1} shows that the condition $\mathrm{char}(k)\notin\Pi(A)$ cannot be removed from Proposition \ref{tu2}.

\medskip

\noindent{ \textbf{ \textit{ Proof of Theorem \ref{la}}}}. By Proposition \ref{tu2}, there is a maximal ideal $J$ of $kA$ such that $JV\neq V$. Set $C=J^\dag$, as in Lemma \ref{tu}, which ensures that the quotient group $A/C$ is locally cyclic.

 Let $N =\mathrm{core}_G(C)$. It follows from the definition of $J^\dag$ that $(1-J^\dag) \subseteq J$ and hence, as $N\leq C=J^\dag$, we have $(1-N) \subseteq J$. Therefore, as $JV\neq V$, we have  $(1-N)V\neq V$. Since $N$ is a normal subgroup of $G$, we can show that $(1-N)V$ is a $kG$-submodule of $kG$. Then, as the module $V$ is irreducible and $(1-N)V\neq V$, we have $(1-N)V=0$, which means that $N$ acts trivially on~$V$.  As $V$ is faithful, we conclude that $N=1$.\qed

\medskip

We have not been able to determine whether
the condition $\mathrm{char}(k)\notin\Pi(A)$ can be removed from Theorem \ref{la}, specially when $A$ is a normal subgroup of of $G$ and $[G:A]$ is not necessarily finite.

\medskip

\noindent{ \textbf{ \textit{ Proof of Theorem \ref{soltor}}}}. (a) It is well-known that a torsion soluble group must be locally finite.
It therefore follows from Theorem \ref{te5} that $\chr(k)\notin \Pi(A)$. We may now apply Theorem \ref{la} to derive the existence of the required subgroup $C$.

(b) As $A$ is a maximal abelian normal subgroup of a soluble group $G$, it is easy to see that $A$ is essential in $G$. By \cite[Lemma 2]{T}, $A$ has an irreducible representation $\varphi$ over $k$ such that
$Ker\, \varphi = C$ . Since $C = Ker \varphi $  contains no nontrivial normal subgroups of $G$, we see that  $A\cap X $ is not contained in
$Ker\, \varphi $ for any nontrivial normal subgroup $X$ of $G$, so the assertion follows from \cite[Lemma 3]{T}.

\medskip

We are finally in a position to prove Theorems \ref{tres} and \ref{nuet}.

\medskip

\noindent{ \textbf{ \textit{ Proof of Theorem \ref{tres}}}}. Suppose first that $G$ has a faithful irreducible $G$-module over $k$. Then, by
Theorem \ref{la}, there is a subgroup $C$ of $\mathrm{A}(G)$ such that
$\mathrm{A}(G)/C$ is locally cyclic and $\mathrm{core}_G(C)=1$. Suppose, conversely, that this condition holds.
Then $G$ has a faithful irreducible $G$-module over $k$ by
\cite[Theorem 1.1]{Sz}.\qed

\medskip

{\color{blue}
\noindent{ \textbf{\textit{ Proof of Theorem \ref{nuet}}}}. Necessity follows from Theorems \ref{tres} and \ref{te5}, while
sufficiency is consequence of \cite[Theorem 1.1]{Sz}.
\qed
}

\section{Examples}\label{ejer}

The following three results are well-known.

\begin{theorem}\label{a1} Let $N\leq G$ be groups. Then

\noindent (a) $J(kG)\cap kN\subseteq J(kN)$.

\noindent (b) If $N\unlhd G$ and $N$ or $[G:N]$ are finite, then
$$
J(kN)=J(kG)\cap kN.
$$

\noindent (c) (Clifford) If $N\unlhd G$ and $N$ or $[G:N]$ are finite, then
every irreducible $kG$-module is a completely reducible $kN$-module.
\end{theorem}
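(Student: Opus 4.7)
The three assertions are classical (part (c) is Clifford's theorem), so my plan is to sketch the standard arguments and highlight where the finiteness hypotheses enter.

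For (a), I would exploit that $kG$ is free as a right $kN$-module on any left transversal $T$ of $N$ in $G$, which we may take to contain $1$. Given $\alpha\in J(kG)\cap kN$ and $\beta\in kN$, the element $1+\beta\alpha$ admits a left inverse $\gamma\in kG$ because $\alpha\in J(kG)$. Writing $\gamma=\sum_{t\in T}t\gamma_t$ with each $\gamma_t\in kN$, and using that $\beta\alpha\in kN$, the product decomposes in the basis $T$ as $\gamma(1+\beta\alpha)=\sum_{t\in T}t\cdot[\gamma_t(1+\beta\alpha)]$. Comparing the $t=1$ component with $1$ yields $\gamma_1(1+\beta\alpha)=1$ inside $kN$, so $\alpha\in J(kN)$ as $\beta$ was arbitrary.

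For (c), given an irreducible $kG$-module $V$ and any irreducible $kN$-submodule $W$, normality of $N$ makes each conjugate $gW$ into a $kN$-submodule, irreducible via the twisted action $n\mapsto g^{-1}ng$. The sum $\sum_{g\in G}gW$ is a nonzero $kG$-submodule of $V$, hence equals $V$, exhibiting $V$ as a sum of irreducible $kN$-modules and therefore as a completely reducible $kN$-module. The delicate point is producing at least one irreducible $kN$-submodule $W$ to get started, and this is where the finiteness hypothesis enters. If $N$ is finite, then $kN\cdot v$ is finite-dimensional for any nonzero $v\in V$ and contains a minimal submodule. If instead $[G:N]$ is finite, I would invoke \cite[Theorem~7.2.16]{Pa}, already cited in this paper, which supplies an irreducible $kN$-submodule of $V$ in this setting.

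With (a) and (c) in hand, the equality in (b) is straightforward: given $\alpha\in J(kN)$ and an irreducible $kG$-module $V$, part (c) presents $V$ as a sum of irreducible $kN$-modules, and $\alpha$ annihilates each such summand by definition of $J(kN)$; hence $\alpha V=0$, so $\alpha$ lies in $J(kG)$, and combining with (a) gives $J(kN)=J(kG)\cap kN$. The main obstacle across all three parts is therefore the same: guaranteeing an irreducible $kN$-submodule inside an irreducible $kG$-module, which is exactly why the finiteness hypothesis on $N$ or $[G:N]$ is imposed in (b) and (c).
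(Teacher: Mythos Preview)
The paper does not supply a proof of this theorem: it is stated under the heading ``The following three results are well-known'' and no argument is given. Your sketch is a correct rendition of the standard proofs, and your identification of the key obstacle---the existence of an irreducible $kN$-submodule inside an irreducible $kG$-module---is exactly right and matches how the paper itself uses \cite[Theorem~7.2.16]{Pa} elsewhere (in the proof of Theorem~\ref{te1}). So there is nothing to compare against; your proposal stands on its own as the expected justification.
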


\begin{theorem}\label{b1} (Lie-Kolchin) Let $V$ be a nonzero finite dimensional vector space over $k$ and let $A$ be a subgroup of $\GL(V)$
consisting of unipotent operators. Then $A$ has a nonzero fixed point in $V$.
\end{theorem}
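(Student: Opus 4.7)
The plan is to reduce to the case of an algebraically closed ground field and then induct on $n = \dim_k V$, settling the irreducible step via Burnside's density theorem together with a short trace computation.

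For the reduction, let $\bar k$ be an algebraic closure of $k$ and set $\bar V = V\otimes_k \bar k$. Each $a\in A$ remains unipotent on $\bar V$, and because $V$ is finite dimensional, the descending chain of subspaces $\ke(a-1)$ ($a\in A$) stabilizes, so the common fixed subspace $V^A$ is a \emph{finite} intersection of kernels of $k$-linear endomorphisms. Since base change commutes with kernels and with finite intersections of subspaces, $\bar V^A = V^A\otimes_k \bar k$, and consequently $V^A\neq 0$ if and only if $\bar V^A\neq 0$. We may therefore assume $k$ is algebraically closed.

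Now induct on $n$. If $n=1$, every unipotent operator is the identity and the conclusion is trivial. For $n\geq 2$, if $V$ admits a proper nonzero $A$-invariant subspace $W$, the restriction $A|_W$ still consists of unipotent operators, so the inductive hypothesis supplies a nonzero vector in $W$ fixed by $A$. It remains to rule out the case where $V$ is irreducible as a $kA$-module.

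Assume $V$ is irreducible. Since $A$ is closed under multiplication, its $k$-linear span $R$ is a subalgebra of $\End_k(V)$ acting irreducibly, so Burnside's density theorem (applicable because $k=\bar k$) gives $R = \End_k(V)$. For any $a,b\in A$, both $a$ and $ab$ lie in $A$ and are unipotent, whence $\mathrm{tr}(a) = n\cdot 1_k = \mathrm{tr}(ab)$ and therefore $\mathrm{tr}\bigl(a(b-1)\bigr)=0$. Extending linearly in the first slot yields $\mathrm{tr}\bigl(x(b-1)\bigr)=0$ for every $x\in R = \End_k(V)$ and every $b\in A$; non-degeneracy of the trace form on $\End_k(V)$ then forces $b-1 = 0$, so $A = \{1\}$. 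But then every subspace of $V$ is $A$-invariant, contradicting irreducibility for $n\geq 2$. Thus the irreducible case never arises when $n\geq 2$, and the induction goes through. The main technical input is Burnside's density theorem; an alternative route, if one wished to avoid it, would exploit that the nilpotent operators $a-1$ collectively generate a nil ideal and apply an Engel-type argument directly to produce an invariant flag.
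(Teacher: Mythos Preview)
Your proof is correct. The reduction to $\bar k$ via the identity $\bar V^{A}=V^{A}\otimes_k\bar k$ (using that $V^{A}$ is already a finite intersection of kernels in finite dimension), the induction, and the Burnside-plus-trace argument in the irreducible case all go through as written; in particular the trace computation $\mathrm{tr}(a(b-1))=0$ and the non-degeneracy of the trace form on $\End_k(V)$ are valid in arbitrary characteristic.

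There is nothing to compare against here: the paper merely records Theorem~\ref{b1} as well known and offers no proof of its own. Your argument is the standard one (essentially the classical proof via Burnside's theorem). One cosmetic point: the phrase ``the descending chain of subspaces $\ke(a-1)$ ($a\in A$) stabilizes'' is slightly loose, since these kernels are not a priori nested; what you mean, and what you then use, is that the descending chain of successive finite intersections stabilizes, so $V^{A}$ equals a finite sub-intersection. This does not affect the validity of the argument.
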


%\begin{lemma}\label{h1} Let $H\leq G$ be groups. Then $J(kG)\cap kH\subseteq J(kH)$.
%\end{lemma}

%\begin{theorem}\label{c1} Let $A\unlhd G$ be groups, with $A$ or $[G:A]$ finite. Then
%$$
%J(kA)=J(kG)\cap kA.
%$$
%\end{theorem}

\begin{theorem}\label{nak} (Nakayama) Let $R$ be a ring and let $V$ be a finitely generated $R$-module
satisfying $J(R)V=V$. Then $V=0$.
\end{theorem}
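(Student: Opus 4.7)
The plan is to argue by contradiction on a minimal generating set. Assume $V\neq 0$. Since $V$ is finitely generated, it admits a finite generating set, and I choose one of minimum cardinality, say $\{v_1,\dots,v_n\}$ with $n\geq 1$.

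Using the hypothesis $J(R)V=V$, the element $v_n$ lies in $J(R)V$, so I can write
\[
v_n=r_1 v_1+r_2 v_2+\cdots+r_n v_n
\]
for some $r_1,\dots,r_n\in J(R)$. Rearranging gives $(1-r_n)v_n=r_1 v_1+\cdots+r_{n-1}v_{n-1}$. The key ring-theoretic input is that every $r\in J(R)$ is quasi-regular, i.e.\ $1-r$ is left invertible in $R$; this is a standard characterization of the Jacobson radical (equivalent to $J(R)$ being the intersection of all maximal left ideals). Multiplying the displayed equation on the left by a left inverse $u$ of $1-r_n$ expresses $v_n$ as an $R$-linear combination of $v_1,\dots,v_{n-1}$, so $\{v_1,\dots,v_{n-1}\}$ already generates $V$, contradicting minimality of $n$. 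In the boundary case $n=1$ the same manipulation yields $v_1=0$, again a contradiction since $v_1$ would generate $V\neq 0$.

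The only nontrivial ingredient is the left-invertibility of $1-r$ for $r\in J(R)$, which I would quote from standard ring theory rather than reprove; everything else is a one-step descent on the size of a minimal generating set. There is no real obstacle, and the proof does not require commutativity of $R$ nor any further hypothesis beyond those stated.
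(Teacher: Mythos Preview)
Your argument is correct and is the standard proof of Nakayama's lemma. The paper, however, does not supply its own proof: Theorem~\ref{nak} is listed among ``well-known'' results and is merely quoted, so there is nothing to compare against. Your write-up would serve perfectly well as a proof if one were desired.
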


\begin{example}\label{jt1} Suppose $k$ has prime characteristic $p$ and let $G=C_p\wr C_\infty$ be the wreath product of a cyclic group $C_p$ of order $p$ by an
infinite cyclic group $C_\infty$. Then $G$ has a faithful irreducible module over $k$.
\end{example}

\begin{proof} It is shown in \cite[Lemma 9.2.8]{Pa} that $kG$ has a faithful irreducible module, which is automatically a faithful $G$-module over $k$.

\end{proof}

\begin{note}{\rm The Jacobson radical of the group algebra $kG$ of Example \ref{jt1} was studied first in \cite{W}.}
\end{note}

Example \ref{jt1} illustrates the failure of several results when
the stated finiteness conditions are removed. Theorems
\ref{te5}, \ref{te1} and \ref{te4} above provide examples of this
phenomenon. Further examples are given by Theorem \ref{a1}, parts (b) and (c),
as well as Theorems \ref{b1} and \ref{nak}. Indeed, let $A$ be a vector space over $F_p$ with basis $(e_i)_{i\in\Z}$ and let $g\in\Aut(A)$ be given by $g(e_i)=e_{i+1}$ for $i\in\Z$, so that $G=A\rtimes \langle g\rangle$. Let $V$ be the faithful irreducible
$G$-module over $k$ ensured by
Example \ref{jt1}. If $V$ had an irreducible $kA$-submodule $U$
then $A$ would act trivially on $U$ by Theorem \ref{te5}, so $A$
would act trivially on $V$ by Lemma \ref{le1}, contradicting the
fact that $V$ is faithful. Moreover, it is clear that $A$ acts on
$V$ via unipotent operators and, as just indicated, $A$ has no
nonzero fixed points in $V$. Furthermore, we have $J(kG)\cap
kA=0$, by  the primitivity of $kG$, whereas Theorem \ref{te5} implies that
$J(kA)$ is the augmentation ideal of $kA$. In this regard, since $J(kA)V$
is a $kG$-submodule of $V$ and $A$ does not act trivially on $V$, we have $J(kA)V=V$.
As $J(kA)$ is the {\em only} maximal ideal of $kA$, this shows, in addition, that the condition
$\mathrm{char}(k)\notin\Pi(A)$ cannot be removed from Proposition \ref{tu2}.

\begin{prop}\label{h2} Let $G$ be a group with subgroups $H$ and $N$ satisfying:

\noindent (H1) $H$ has a faithful irreducible representation over $k$.

\noindent (H2) $N$ is normal and nontrivial, and is included in every
nontrivial normal subgroup of $G$.

\noindent (H3) $N\cap H\neq 1$.

Then $G$ has a faithful irreducible representation over $k$.
\end{prop}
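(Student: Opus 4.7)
The plan is to induce the faithful irreducible $kH$-module supplied by (H1) up to $G$ and extract an irreducible quotient that will be forced to be faithful by (H2) and (H3). Concretely, I would fix a faithful irreducible $kH$-module $U$, choose a nonzero element $u\in U$, and form the induced module $V=kG\otimes_{kH} U$. Since $U=kH\cdot u$ and $h\otimes u'=1\otimes hu'$ in the tensor product over $kH$, the module $V$ is cyclic over $kG$, generated by $1\otimes u$. Zorn's lemma applied to the family of $kG$-submodules of $V$ not containing $1\otimes u$ (closed under unions of chains, since membership of $1\otimes u$ in a union forces membership in some term of the chain) then produces a maximal proper submodule $V_0\subsetneq V$, so that $W=V/V_0$ is an irreducible $kG$-module.

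The key step is to exhibit $U$ as a $kH$-submodule of $W$. The natural candidate is the $kH$-linear map $\phi\colon U\to W$, $u'\mapsto\overline{1\otimes u'}$. If $\phi$ were zero, then $1\otimes u\in V_0$, forcing $V_0\supseteq kG\cdot(1\otimes u)=V$ and contradicting $V_0\subsetneq V$. Since $U$ is irreducible, the nonzero $kH$-linear map $\phi$ is automatically injective, so $U$ embeds into $W$ as a $kH$-module.

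To conclude, I would invoke (H3) to pick $1\neq x\in N\cap H$. Faithfulness of $U$ guarantees that $x$ acts nontrivially on $U$, hence on the embedded copy $\phi(U)\subseteq W$, so $x$ lies outside the $G$-kernel of $W$. Thus $\ker_G(W)$ is a normal subgroup of $G$ that omits the element $x\in N$, and therefore omits $N$ entirely; by (H2) it must be trivial, making $W$ the desired faithful irreducible representation of $G$ over $k$. No substantial obstacle is anticipated; the only delicate point is the appeal to Zorn's lemma for a maximal proper submodule, which succeeds precisely because of the cyclicity of $V$ (and hence does not require any chain condition on $kG$).
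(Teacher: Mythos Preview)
Your proof is correct and takes a genuinely different route from the paper's. The paper argues via the Jacobson radical: having fixed $1\neq x\in N\cap H$, it notes that $x-1\notin J(kH)$ (otherwise $x$ would act trivially on the faithful irreducible $kH$-module supplied by (H1)), invokes the standard inclusion $J(kG)\cap kH\subseteq J(kH)$ to deduce $x-1\notin J(kG)$, and then simply selects any irreducible $kG$-module on which $x-1$ does not vanish. Your approach instead constructs the witness explicitly: you induce the faithful simple $kH$-module $U$ up to $G$, pass to a simple quotient $W$ (available because the induced module is cyclic, so Zorn applies), and embed $U$ into $W$ as a $kH$-submodule, making it visible that $x$ acts nontrivially on $W$. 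The paper's argument is shorter and isolates the role of the radical inclusion; yours is self-contained (no appeal to facts about $J(kG)\cap kH$) and makes the provenance of the irreducible module transparent as a simple quotient of $kG\otimes_{kH}U$. Both finish identically via (H2).
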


\begin{proof} By (H3), there is a nontrivial $x\in N\cap H$. By (H1), $x-1\notin J(kH)$, so $x-1\notin J(kG)$ by Theorem \ref{a1}(a).
Thus, there is an irreducible $kG$-module $V$ not acted upon trivially by $x$. It follows from (H2) that $V$ is a faithful $G$-module.
\end{proof}

\begin{example}\label{e1} Suppose $k$ has prime characteristic $p$. Let $D$ be a division algebra of characteristic $p$ that is not
algebraic over its prime field. Then $G=D^+\rtimes D^*$ has a faithful irreducible module over $k$.
\end{example}

\begin{proof} Take $N=D^+$ and $H=\langle t^i\,|\, i\in \Z\rangle \rtimes \langle t\rangle\leq D^+\rtimes D^*$
in Proposition \ref{h2}, where $t\in D$ is transcendental over $F_p$, and use Example \ref{jt1}.
\end{proof}

\begin{example}\label{e2} Suppose $k$ has prime characteristic $p$. Let $M=M(\Q,\leq, F_p)$ be the  McLain group, as defined in \cite{M},
and set $G=M\rtimes \Aut(M)$. Then $G$ has a faithful irreducible module over $k$.
\end{example}

\begin{proof} Let $A$ be the subgroup of $M$ generated by $1+e_{2i,2i+1}$, $i\in\Z$, let $g\in\Aut(M)$ be induced by the
translation $i\mapsto i+2$ of $\Q$. Take $H=A\rtimes \langle g\rangle$ and $N=M$ in Proposition \ref{h2} and use Example \ref{jt1}.
\end{proof}

Since $D^+$ and $M$ are locally finite $p$-groups, $D^+\rtimes
D^*$ and $M\rtimes \Aut(M)$ also exhibit the failure of Theorems
\ref{te4}, \ref{te5}, \ref{te1}, \ref{a1}, \ref{b1} and \ref{nak}
when the required finiteness conditions are removed (except that
$M\rtimes \Aut(M)$ is unrelated to Theorem \ref{te1} as $M$ is not
abelian). Moreover, Example \ref{e1} shows that \cite[Theorem
1]{T} and \cite[Theorem 1.2]{Sz} cease to be valid if their
respective finiteness conditions are removed. Indeed, in Example
\ref{e1} the given group $G$ has a minimal normal subgroup $A$
that is an abelian $p$-group but nevertheless $G$ has a faithful
irreducible module over a field of characteristic $p$ (in this
regard, note that the socle of $C_p\wr C_\infty$ is trivial).

%\section{Still unclear}

%\begin{ques} Can we remove the condition $\chr(k)\notin\Pi(A)$
%from Theorem \ref{la}? What if $A$ is normal? How about if $A$ is
%the direct sum of isomorphic irreducible $\Z G$-modules, with $G$
%acting by conjugation?
%\end{ques}

\end{document}